\numberwithin{equation}{section}
\theoremstyle{plain}
\newtheorem{theorem}{Theorem}[section]
\newtheorem{lemma}[theorem]{Lemma}
\theoremstyle{definition}
\newtheorem{definition}[theorem]{Definition}
\newtheorem{case[theorem]}{Case}
\theoremstyle{remark}
\numberwithin{equation}{section}
\def\dH{\dim_{{\mathcal H}}}
\def\R{\Bbb R}
\def\a{\alpha}
\begin{document}

\title{Falconer distance problem, additive energy and Cartesian products} 


\author{Alex Iosevich and Bochen Liu}

\date{today}

\keywords{distance problem, Cartesian products, additive energy, Ahlfors-David regular}
\subjclass[2010]{28A75, 52C10}
\email{iosevich@math.rochester.edu}
\email{bochen.liu@rochester.edu}
\address{Department of Mathematics, University of Rochester, Rochester, NY}


\maketitle

\begin{abstract} A celebrated result due to Wolff says if $E$ is a
  compact subset of ${\Bbb R}^2$, then the Lebesgue measure of the
  distance set $\Delta(E)=\{|x-y|: x,y \in E \}$ is positive if the
  Hausdorff dimension of $E$ is greater than $\frac{4}{3}$. In this
  paper we improve the $\frac{4}{3}$ barrier by a small exponent for
  Cartesian products. In higher dimensions, also in the context of
  Cartesian products, we reduce Erdogan's $\frac{d}{2}+\frac{1}{3}$ exponent to $\frac{d^2}{2d-1}$. The proof uses a combination of Fourier analysis and additive comibinatorics. \end{abstract} 


\section{Introduction}

\vskip.125in

The Falconer distance conjecture (\cite{Fal86}) says that if the Hausdorff dimension of $E \subset {\Bbb R}^d$, $d \ge 2$, is greater than $\frac{d}{2}$, then the Lebesgue measure of the distance set $\Delta(E)=\{|x-y|: x,y \in E \}$ is positive. 

The best known results are due to Wolff (\cite{W99}) in two dimensions and Erdogan (\cite{Erd05}) in higher dimensions. They proved that the Lebesgue measure of $\Delta(E)$ is positive if the Hausdorff dimension of $E$ is greater than $\frac{d}{2}+\frac{1}{3}$. This was accomplished by showing that if $s \in \left( \frac{d}{2}, \frac{d+2}{2} \right)$ is the Hausdorff dimension of $E$ and $\mu$ is a Frostman measure on $E$ which has finite $(s-\epsilon)$-energy $I_{s-\epsilon}(\mu)$ for all $\epsilon>0$, then for all $\epsilon>0$,
\begin{equation} \label{erdest} \int_{S^{d-1}} {|\widehat{\mu}(t \omega)|}^2 d\omega \leq C t^{-\frac{d+2s-2}{4}+\epsilon}.\end{equation} 

In particular, in the two-dimensional case, which is the focus of this paper, the estimate takes the form 
\begin{equation} \label{erdest2} \int_{S^1} {|\widehat{\mu}(t \omega)|}^2 d\omega \leq C t^{-\frac{s}{2}+\epsilon}.  \end{equation}

This estimate is then plugged into the Mattila integral, 
\begin{equation} \label{mattilaint} {\mathcal M}(\mu)=\int_1^{\infty} {\left( \int_{S^{d-1}} {|\widehat{\mu}(t \omega)|}^2 d\omega \right)}^2 t^{d-1} dt, \end{equation} the most effective tool developed so far for the study of the Falconer distance problem. 

Mattila proved in \cite{Mat87} that if $E$ is a compact set of Hausdorff dimension $s>\frac{d}{2}$ and $\mu$ is Borel measure supported on $E$ such that ${\mathcal M}(\mu)<\infty$, then the Lebesgue measure of $\Delta(E)$ is positive. For discussion about other versions of Mattila integrals, see \cite{GILP13}.
\vskip.125in
Our results are the following. 

\begin{theorem} \label{main1} Let $E=A \times B$, where $A$ and $B$
  are compact subsets of ${\Bbb R}$ with positive $s_A, s_B$-dimensional Hausdorff measure, respectively. If $s_A+s_B+\max(s_A,s_B)>2$, the Lebesgue measure of
  $\Delta(E)$ is positive. In particular, if $\dH(E)=\dH(A)+\dH(B)$ and $\dH(A)\neq \dH(B)$,
  $\dH(E)>\frac{4}{3}-\frac{|\dH(A)-\dH(B)|}{3}$ implies $\Delta(E)$ has positive Lebesgue measure.
\end{theorem}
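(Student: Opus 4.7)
The plan is to take the product Frostman measure $\mu=\mu_A\otimes\mu_B$, where $\mu_A$ has finite $(s_A-\epsilon)$-energy and $\mu_B$ has finite $(s_B-\epsilon)$-energy, and verify that the Mattila integral $\mathcal{M}(\mu)$ is finite; Mattila's theorem then delivers $|\Delta(E)|>0$. Since $\hat\mu(\xi_1,\xi_2)=\hat\mu_A(\xi_1)\hat\mu_B(\xi_2)$ factors, parameterizing $\omega=(\cos\theta,\sin\theta)\in S^1$ and changing variables on each of the two arcs $\{|\sin\theta|\le|\cos\theta|\}$ and $\{|\sin\theta|\ge|\cos\theta|\}$ reduces the spherical average to
$$
F(t):=\int_{S^1}|\hat\mu(t\omega)|^2\,d\omega \;\lesssim\; \frac{1}{t}\int_{|v|\le t/\sqrt{2}}|\hat\mu_A(v)|^2|\hat\mu_B(\sqrt{t^2-v^2})|^2\,dv \;+\; (A\leftrightarrow B).
$$
Bounding one Fourier factor by $1$ and invoking the Plancherel--energy identity $\int_{|v|\le T}|\hat\mu_X(v)|^2\,dv\lesssim T^{1-s_X+\epsilon}$ on the other immediately yields the easy pointwise estimate $F(t)\lesssim t^{-\min(s_A,s_B)+\epsilon}$, which already proves the analogous theorem with $\min$ in place of $\max$.

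To push to $\max$, assume without loss of generality $s_A\ge s_B$ and refine the arc on which $\hat\mu_A$ sits at the large argument by Cauchy--Schwarz:
$$
\frac{1}{t}\int|\hat\mu_A(\sqrt{t^2-u^2})|^2|\hat\mu_B(u)|^2\,du \;\le\; \frac{1}{t}\Big(\int|\hat\mu_A(\sqrt{t^2-u^2})|^4\,du\Big)^{1/2}\Big(\int|\hat\mu_B(u)|^4\,du\Big)^{1/2}.
$$
The second factor is the additive energy $\|\mu_B*\mu_B\|_2^2$ (finite, in the relevant regime, under the dimension hypothesis); the first is treated by the change of variable $v=\sqrt{t^2-u^2}$ combined with a dyadic decomposition of the singular Jacobian $v/\sqrt{t^2-v^2}$ near $v=\pm t$ and the $L^2$-energy bound for $\hat\mu_A$ at scale $t$. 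The outcome is the sharper pointwise bound $F(t)\lesssim t^{-s_A+\epsilon}=t^{-\max(s_A,s_B)+\epsilon}$.

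Inserting this into the interpolated Mattila bound
$$
\mathcal{M}(\mu)\;\le\;\Big(\sup_{t\ge1}\,t^\beta F(t)\Big)\cdot\int_1^\infty t^{1-\beta}F(t)\,dt,
$$
we observe that the second factor equals, up to constants, the $(2-\beta)$-energy of $\mu$ and is therefore finite provided $\beta>2-s_A-s_B$, while the first factor is bounded for $\beta\le\max(s_A,s_B)-\epsilon$. The admissible range of $\beta$ is nonempty precisely when $s_A+s_B+\max(s_A,s_B)>2$, which is the stated hypothesis; this proves the first assertion, and the ``in particular'' statement is immediate algebra.

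The main obstacle is the improved pointwise bound on the harder arc: the singular Jacobian $1/\sqrt{t^2-v^2}$ introduced by the change of variables prevents a naive application of Cauchy--Schwarz, and when $s_B\le 1/2$ the global $L^4$-norm of $\hat\mu_B$ is infinite, so one must invoke a localized or truncated form of the additive energy, or supplement the argument with an energy-only estimate in that range. This is precisely where the paper's title ``additive energy'' enters the argument.
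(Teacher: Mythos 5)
There is a genuine gap, and it sits exactly where you flag it: the ``improved pointwise bound on the harder arc.'' Your whole argument rests on the claim that the \emph{unweighted} spherical average satisfies $F(t)\lesssim t^{-\max(s_A,s_B)+\epsilon}$, but your Cauchy--Schwarz step cannot deliver this. Even granting the best available bound on the two $L^4$ factors (the trivial additive-energy estimate $\int_{|u|\le t}|\widehat{\mu}_B(u)|^4\,du\lesssim t^{1-s_B}$, which is all one has for a general compact set of positive $s_B$-measure), the product gives at best $t^{-1}\cdot t^{(1-s_A)/2}\cdot t^{(1-s_B)/2}=t^{-(s_A+s_B)/2}$ --- the \emph{average} of the exponents, not the max, and hence strictly worse than $t^{-s_A}$ whenever $s_A>s_B$. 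Beating the trivial additive energy requires structural hypotheses on the set; that is precisely the content of the paper's separate Theorem \ref{main2}, which needs Ahlfors--David regularity and the Dyatlov--Zahl theorem, and is not available under the hypotheses of Theorem \ref{main1}. On top of this, the singular Jacobian $v/\sqrt{t^2-v^2}$ in your first factor makes the dyadic accounting strictly lossier still. Worse, the target inequality itself is doubtful: if $|\widehat{\mu}_A|\gtrsim c$ on an interval of unit length around $t$ (which happens along a sequence $t_k\to\infty$ for every measure on, e.g., suitable self-similar sets of dimension close to $1$), then the cap $|\theta|\le t^{-1/2}$ alone contributes $\gtrsim c^2\,t^{-1}\int_{|v|\le \sqrt t}|\widehat{\mu}_B(v)|^2dv\sim t^{-(1+s_B)/2}$ to $F(t)$, which exceeds $t^{-s_A}$ whenever $s_A>(1+s_B)/2$ --- a range permitted by the hypothesis $2s_A+s_B>2$. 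So the unweighted pointwise bound you are after is not just unproven; it appears to be false in general.

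The paper's resolution is different and is the one idea your proposal is missing: it does not estimate $\int_{S^1}|\widehat{\mu}(t\omega)|^2\,d\omega$ at all, but the weighted average $\int_{S^1}|\widehat{\mu}(t\omega)|^2|\sin\theta|\,d\omega$. In the parameterization $u=\cos\theta$ the weight $|\sin\theta|$ exactly cancels the Jacobian $d\theta=du/\sqrt{1-u^2}$, so the whole semicircle is handled in one line: the weighted average equals $\int_{-1}^{1}|\widehat{\nu}_A(tu)|^2|\widehat{\nu}_B(\pm t\sqrt{1-u^2})|^2\,du\le\int_{-1}^1|\widehat{\nu}_A(tu)|^2\,du\lesssim t^{-s_A}$, with no hard arc, no Cauchy--Schwarz, and no additive energy. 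The price is that one must show the weighted (``modified'') Mattila integral still controls the distance set; the paper does this by rerunning Wolff's derivation with the modified distance measure $d\nu_0$ defined by $\int f\,d\nu_0=\iint f(|x-y|)\frac{|x_2-y_2|}{|x-y|}\,d\mu(x)\,d\mu(y)$, together with a stationary-phase computation and the non-degeneracy condition \eqref{horsep} guaranteeing $\nu_0\neq 0$. Your final interpolation of $F$ into the Mattila integral and the concluding algebra match the paper, but without the weighted average (or some genuinely new substitute for it) the central estimate $F(t)\lesssim t^{-\max(s_A,s_B)+\epsilon}$ is unsupported and the proof does not close.
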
 

\vskip.125in 

To state the result in the case $\dH(A)=\dH(B)$, we need the following definition.

\begin{definition}\label{regC}
  Let $A$ be a compact subset of $\R^d$ of Hausdorff dimension
  $s_A$. We say $A$ is Ahlfors-David regular if there exists a Radon
  measure $\nu_A$ on $A$ and a constant $0<C_{\nu_A}<\infty$ such that
\begin{equation}\label{AD}
C_{\nu_A}^{-1}r^{s_A}<\nu_A(B(x,r))<C_{\nu_A}r^{s_A},\ \forall x\in
A,\,0<r<1.
\end{equation}

\end{definition}
\vskip.125in 
\begin{theorem}\label{main2}
Suppose $E=A \times B$, $s_A=s_B=\alpha$ and $A$ is
Ahlfors-David regular with $\nu_A$, $C_{\nu_A}$ such that \eqref{AD} holds. Then there exists $\delta=\delta(C_{\nu_A})>0$ such that
whenever $\a>\frac{2}{3}-\delta$, the Lebesgue measure of
  $\Delta(E)$ is positive.
\end{theorem}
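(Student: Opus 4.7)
My strategy is to apply Mattila's $L^{2}$ criterion to the product measure $\mu=\mu_{A}\times\mu_{B}$, where $\mu_{A}$ is the AD-regular measure provided by hypothesis and $\mu_{B}$ is a Frostman measure of dimension just below $\alpha$ on $B$, and to leverage the Cartesian structure to improve upon Wolff's $\alpha>2/3$ threshold by a quantitative amount coming from an additive-energy estimate on $\mu_{A}$. The starting observation is the factorisation
\[
\sigma(t)=\int_{S^{1}}|\widehat{\mu}(t\omega)|^{2}\,d\omega=\int_{0}^{2\pi}|\widehat{\mu_{A}}(t\cos\theta)|^{2}|\widehat{\mu_{B}}(t\sin\theta)|^{2}\,d\theta,
\]
which reduces the spherical integral to one-dimensional Fourier quantities. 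After splitting $S^{1}$ into polar arcs (where one coordinate is smaller than $t^{-\lambda}$) and an equatorial complement, the polar piece contributes $O(t^{-\lambda})$ via the trivial $L^{\infty}$ bound; on the equatorial piece the change of variable $u=t\cos\theta$ (Jacobian $\sim t^{-1}$) turns $\sigma(t)$ into a one-dimensional integral of $|\widehat{\mu_{A}}(u)|^{2}|\widehat{\mu_{B}}(\sqrt{t^{2}-u^{2}})|^{2}$ in $u$.

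The core of the argument is a Cauchy--Schwarz or balanced H\"older split of this one-dimensional integral that separates a purely $\mu_{A}$-factor, namely the truncated additive energy
\[
\mathcal{E}_{t}(\mu_{A}):=\int_{|u|\le t}|\widehat{\mu_{A}}(u)|^{4}\,du,
\]
from a $\mu_{B}$-factor that is controlled by Erdogan's circular average \eqref{erdest2} of $\mu_{B}$ alone. Inserted back into $\mathcal{M}(\mu)$, the bound produced is of Wolff type when the trivial estimate $\mathcal{E}_{t}(\mu_{A})\lesssim t^{1-\alpha+\epsilon}$ (from $|\widehat{\mu_{A}}|^{2}\le 1$ combined with the finite $(\alpha-\epsilon)$-energy of $\mu_{A}$) is used, reproducing the $\alpha>2/3$ threshold; any strictly better bound of the form $\mathcal{E}_{t}(\mu_{A})\lesssim t^{1-\alpha-\eta_{0}}$ with $\eta_{0}>0$ then pushes the threshold down by an amount proportional to $\eta_{0}$.

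The decisive ingredient is therefore such a power-saving additive-energy estimate for AD-regular sets. At discretisation scale $\rho=t^{-1}$ this amounts to showing $E(A_{\rho})\lesssim|A_{\rho}|^{3-\eta_{0}}$ for some $\eta_{0}=\eta_{0}(C_{\nu_{A}})>0$, a statement in the spirit of Bourgain and Katz--Tao: near-maximal additive energy would, via the Balog--Szemer\'edi--Gowers theorem, force an approximate arithmetic progression inside $A_{\rho}$, contradicting the AD-regularity of $A$ at a nearby scale. Transferring this discrete bound back to the Fourier side via Plancherel and a Littlewood--Paley decomposition yields $\mathcal{E}_{t}(\mu_{A})\lesssim t^{1-\alpha-\eta_{0}}$, and optimising $\lambda$ against $\eta_{0}$ in the Mattila integral gives convergence whenever $\alpha>2/3-\delta$ for some $\delta=\delta(C_{\nu_{A}})$ proportional to $\eta_{0}$. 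The principal obstacle I anticipate, and the reason $\delta$ must remain ineffective, is precisely this combinatorial-to-Fourier transfer; a subsidiary technical annoyance is the square-root Jacobian $\sqrt{t^{2}-u^{2}}$ near the axes, which must be handled carefully when balancing the polar and equatorial contributions.
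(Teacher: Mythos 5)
Your overall route is the same as the paper's: factor the spherical average using the product structure, split off polar arcs, apply Cauchy--Schwarz on the equatorial piece to isolate the scale-$t^{-1}$ additive energies of $\nu_A$ and $\nu_B$, and feed a power-saving energy bound for the AD-regular factor back into the Mattila integral. But there are two genuine gaps.

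First, your treatment of the polar arcs does not close numerically. Bounding the arc $\{|\sin\theta|\le t^{-\lambda}\}$ by its measure via $|\widehat{\mu_A}|,|\widehat{\mu_B}|\le 1$ gives only $O(t^{-\lambda})$, so to beat the target $t^{-\alpha-\delta}$ you are forced to take $\lambda>\alpha\approx 2/3$. But then the Jacobian $d\theta = du/|\sin\theta|$ on the equatorial piece costs a factor $t^{\lambda}$, and this must be absorbed by the additive-energy saving $t^{-\eta_0/2}$, where $\eta_0$ is the Dyatlov--Zahl exponent $\beta_\nu=\alpha\,e^{-\exp[K(1+\log C_\nu)^{1/2}(1-\alpha)^{-1/2}]}$ --- astronomically small. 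You cannot have $\lambda>2/3$ and $\lambda<\eta_0/2$ simultaneously. The fix is to use the $L^2$ average (Lemma \ref{basic}) rather than the $L^\infty$ bound on the polar arc: for an arc of width $t^{-\gamma_0}$ one gets
\[
\int_0^{t^{-\gamma_0}} |\widehat{\nu}_B(t\sin\theta)|^2\,d\theta \lesssim t^{-\gamma_0(1-\alpha)}\cdot t^{-\alpha},
\]
which already improves on $t^{-\alpha}$ for \emph{arbitrarily small} $\gamma_0>0$; then both the polar gain $\gamma_0(1-\alpha)$ and the equatorial Jacobian loss $O(\gamma_0)$ scale with $\gamma_0$, which can be chosen small relative to $\eta_0$. (Relatedly, your claim that the $\mu_B$-factor is "controlled by Erdogan's circular average \eqref{erdest2} of $\mu_B$ alone" does not parse --- $\mu_B$ lives on $\R$, not $\R^2$; the correct bound for that factor is the trivial energy estimate $II\lesssim t^{-\alpha}$, again from Lemma \ref{basic}.)

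Second, the decisive input --- the power-saving additive energy bound $\mathcal{E}(A,\nu_A,t^{-1})\lesssim t^{-\alpha-\eta_0}$ for AD-regular $A$ --- is exactly Theorem \ref{Z15} of Dyatlov and Zahl, and your two-line derivation of it is not a proof. Balog--Szemer\'edi--Gowers applied to a discretization $A_\rho$ with near-maximal energy produces a large subset with small doubling, hence (via Freiman-type structure) containment in a generalized arithmetic progression; but ruling this structure out for AD-regular sets of dimension $\alpha<1$ is precisely the hard part of \cite{Z15} and requires a multi-scale induction, not a one-step contradiction with regularity "at a nearby scale." (Note also that the resulting $\delta$ is effective, if minuscule, contrary to your remark.) If you cite Theorem \ref{Z15} as a black box and repair the polar-arc estimate as above, your argument matches the paper's.
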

We also obtain an improvement of Erdogan's $\frac{d}{2}+\frac{1}{3}$ exponent in higher dimension for Cartesian products. 
\begin{theorem} \label{mind} Suppose that $E$ is a compact subset of ${\Bbb R}^d$ of the form $A_1 \times A_2 \times \dots \times A_d$, where $A_j \subset {\Bbb R}$ has positive $s_j$-dimensional Hausdorff measure for all $1\leq j\leq d$. Suppose that $\sum_{j=1}^d s_j>\frac{d^2}{2d-1}$. Then the Lebesgue measure of $\Delta(E)$ is positive. \end{theorem}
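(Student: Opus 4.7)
Let $\mu_j$ be a Frostman measure on $A_j$ satisfying $\mu_j(B(x,r))\le C r^{s_j-\epsilon}$ for arbitrary $\epsilon>0$, and let $\mu=\mu_1\otimes\cdots\otimes\mu_d$, so that
$$|\widehat\mu(\xi)|^2=\prod_{j=1}^d|\widehat{\mu_j}(\xi_j)|^2.$$
By Mattila's theorem, it suffices to show that the Mattila integral $\mathcal M(\mu)$ from (\ref{mattilaint}) is finite whenever $s=\sum_j s_j>\frac{d^2}{2d-1}$. The goal is therefore to improve Erdogan's estimate (\ref{erdest}) on the inner spherical average
$$\int_{S^{d-1}}\prod_{j=1}^d|\widehat{\mu_j}(t\omega_j)|^2\,d\omega$$
for this special class of measures, exploiting the fact that the Fourier decay of $\widehat\mu$ is spread across the $d$ coordinate directions rather than concentrated in a single direction as can happen for a generic measure of dimension $s$.

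The plan has four steps. \emph{First,} decompose $S^{d-1}$ dyadically into pieces on which $|\omega_j|\sim 2^{-k_j}$ for non-negative integers $k_j$ with the constraint $\sum_j 2^{-2k_j}\sim 1$; after the change of variables $\tau_j=t\omega_j$, each such piece becomes a rectangular region in a coordinate hyperplane. \emph{Second,} on each piece, estimate $\prod_j|\widehat{\mu_j}(\tau_j)|^2$ by combining the one-dimensional $L^2$ energy bound $\int_{|\tau|\sim R}|\widehat{\mu_j}(\tau)|^2\,d\tau\lesssim R^{1-s_j+\epsilon}$ (applied at the scale $R=t\cdot 2^{-k_j}$) with the trivial bound $|\widehat{\mu_j}|\le 1$, taking into account the sphere Jacobian $(1-|\omega'|^2)^{(d-3)/2}$. \emph{Third,} sum the contributions as geometric series in each $k_j$ to obtain a pointwise bound of the form $t^{-\gamma+\epsilon}$ on the inner spherical average. \emph{Fourth,} substitute this into (\ref{mattilaint}) and verify that $\int_1^\infty t^{d-1-2\gamma+2\epsilon}\,dt<\infty$ holds precisely when $s>\frac{d^2}{2d-1}$.

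The main difficulty lies in the second step. A straightforward Hölder on the sphere, applying the 1D energy bound to each factor, only reaches the much weaker threshold $s>d^2/2$; this is unsurprising because it treats each coordinate factor as if it were concentrated at a single scale. To reach the sharper exponent $\frac{d^2}{2d-1}$ one must interpolate, on each dyadic piece, between the 1D energy estimate and the trivial bound $|\widehat{\mu_j}|\le 1$, selecting the interpolation parameters so that they are balanced optimally against both the sphere Jacobian $(1-|\omega'|^2)^{(d-3)/2}$ and the dyadic scales $2^{-k_j}$. The arithmetic of this optimization, together with the geometric summation over the dyadic indices $\vec k$, is what reduces Erdogan's $\frac{d}{2}+\frac{1}{3}$ exponent to $\frac{d^2}{2d-1}$ in the Cartesian product setting and constitutes the substance of the proof.
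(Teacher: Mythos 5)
Your steps 1--3 are a more elaborate route to an estimate the paper obtains in one line, but step 4 contains a genuine gap that prevents the argument from reaching the exponent $\frac{d^2}{2d-1}$. The best pointwise bound on the spherical average that your method (or any method based on the one-dimensional $L^2$ solid-average bound plus $|\widehat{\mu_j}|\le 1$) can produce is
$$\int_{S^{d-1}}\prod_{j=1}^d|\widehat{\mu_j}(t\omega_j)|^2\,d\omega\ \lesssim\ t^{-(s_1+\cdots+s_{d-1})}\ \le\ t^{-\frac{d-1}{d}s},$$
coming from the main piece of your decomposition where all $|\omega_j|\sim 1$: there are only $d-1$ free coordinates on the sphere, so one of the $d$ factors must be estimated in sup norm, and a measure of dimension $s_j<1$ has no pointwise Fourier decay, so that factor only contributes the trivial bound $1$. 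Your dyadic pieces with $k_j$ large contribute less, so the interpolation cannot improve on $\gamma=\frac{d-1}{d}s$. If you now square this bound inside \eqref{mattilaint}, convergence of $\int_1^\infty t^{d-1-2\gamma}\,dt$ requires $\gamma>\frac{d}{2}$, i.e.\ $s>\frac{d^2}{2(d-1)}$, which is strictly worse than $\frac{d^2}{2d-1}$ (for $d=2$ it is the vacuous condition $s>2$ rather than $s>\frac43$). So the arithmetic in your step 4 does not close.

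The missing idea is the \emph{asymmetric} use of the Mattila integral: insert the bound $t^{-\gamma}$ for only one of the two factors of the square and keep the other, so that
$$\mathcal M(\mu)\ \lesssim\ \int_1^\infty\int_{S^{d-1}}|\widehat{\mu}(t\omega)|^2\,t^{d-1}\,t^{-\gamma}\,d\omega\,dt\ =\ C\int|\widehat{\mu}(\xi)|^2|\xi|^{-\gamma}\,d\xi\ =\ C'\,I_{d-\gamma}(\mu),$$
which is finite under the weaker condition $d-\gamma<s$; with $\gamma=\frac{d-1}{d}s$ this is exactly $s>\frac{d^2}{2d-1}$. Two further remarks. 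First, to make the flattening of the sphere work one must handle the Jacobian singularity $(1-|u|^2)^{-1/2}$ at the equator; the paper does this not by a dyadic decomposition but by running the whole argument with the weighted spherical average $\int_{S^{d-1}}|\widehat{\mu}(t\omega)|^2|\sin\theta_\omega|\,d\omega$ (the weight exactly cancels the Jacobian), which requires the small additional lemma that finiteness of the correspondingly weighted Mattila integral still yields $|\Delta(E)|>0$. Second, you should order the fibers so that the factor estimated in sup norm is the one of smallest dimension, i.e.\ take $s_1\ge\cdots\ge s_d$ and drop $\widehat{\nu}_d$; this is what turns $s_1+\cdots+s_{d-1}$ into the lower bound $\frac{d-1}{d}s$.
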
 

\vskip.125in 

\subsection{Acknowledgements} The authors wish to thank Josh Zahl and the anonymous referee for several useful suggestions. 

\vskip.125in 

\subsection{Outline of the proof of Theorems \ref{main1}, \ref{main2} and \ref{mind}} Our argument consists of three basic steps. 

\vskip.125in 

\begin{itemize} 
\item We first establish Theorem \ref{main1} which is accomplished using the imbalance inherent in the structure of the Mattila Integral. 

\vskip.125in 

\item The improvement of the $\frac{d}{2}+\frac{1}{3}$ exponent for Cartesian products in higher dimensions (Theorem \ref{mind}) is accomplished in the same way regardless of whether the Hausdorff dimension of the fibers is the same. 

\vskip.125in 

\item In the case when $dim_{{\mathcal H}}(A) =dim_{{\mathcal H}}(B)$,
  we use a recent result due to Semyon Dyatlov and Josh Zahl
  (\cite{Z15}) to show that when $A$ is Ahlfors-David regular, the additive energy of $A$ at scale $t^{-1}$, 
$$ \nu_A^4 \{(a_1,a_2,a_3,a_4): |(a_1+a_2)-(a_3+a_4)| \leq t^{-1} \}, $$ where $\nu_A$ is a Frostman measure on 
$A$, satisfies a better than trivial estimate, namely $Ct^{-dim_{{\mathcal H}}(A)-\delta}$ for some $\delta>0$, and then show that this leads to a slightly better exponent than  $\frac{4}{3}$. 

\vskip.125in

\end{itemize} 

\vskip.25in 

\section{Proof of Theorem \ref{main1}} 

\vskip.125in 

We shall repeatedly use the following simple estimate. 

\begin{lemma} \label{basic} (Solid Average) Suppose that $\nu$ is a compactly supported Borel measure on ${\Bbb R}^d$ such that $\nu(B(x,r)) \leq Cr^{\alpha}$ for all $x \in {\Bbb R}^d$. Then for any bounded rectangle $R$, 
$$ \int_R {|\widehat{\nu}(tu)|}^2 du \leq C_R t^{-\alpha}.$$ 
\end{lemma}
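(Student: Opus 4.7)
The plan is to dominate the sharp cutoff $\chi_R$ by a Schwartz majorant and then exploit the rapid decay of its Fourier transform against the Frostman-type growth of $\nu$. Concretely, fix a Schwartz function $\psi$ on $\mathbb{R}^d$ with $\psi \geq \chi_R$; because $R$ is a bounded rectangle, such a $\psi$ exists with $|\widehat{\psi}(\xi)| \leq C_{R,N}(1+|\xi|)^{-N}$ for every $N$.

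Next, expand the square as a double integral, $|\widehat{\nu}(tu)|^2 = \int\int e^{-2\pi i t u\cdot(x-y)} \, d\nu(x)\, d\nu(y)$, and apply Fubini to get
\begin{equation*}
\int_R |\widehat{\nu}(tu)|^2 \, du \;\leq\; \int \psi(u)\,|\widehat{\nu}(tu)|^2 \, du \;=\; \int\!\!\int \widehat{\psi}\bigl(t(x-y)\bigr) \, d\nu(x)\, d\nu(y).
\end{equation*}
It remains to bound this double integral by $C_R\,t^{-\alpha}$. Fix $x$ and decompose the $y$-integration dyadically: the ``center'' $\{y : |x-y| \leq 1/t\}$ has $\nu$-mass at most $C\,t^{-\alpha}$ by the hypothesis, and on each annulus $A_k = \{y : 2^k/t \leq |x-y| \leq 2^{k+1}/t\}$ the rapid decay of $\widehat{\psi}$ gives $|\widehat{\psi}(t(x-y))| \lesssim 2^{-kN}$ while the Frostman condition gives $\nu(A_k) \leq C\,(2^{k+1}/t)^{\alpha}$. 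Summing the resulting geometric series with $N > \alpha$ yields
\begin{equation*}
\int |\widehat{\psi}\bigl(t(x-y)\bigr)| \, d\nu(y) \;\leq\; C'_R\, t^{-\alpha} \sum_{k \geq 0} 2^{-k(N-\alpha)} \;\lesssim_R\; t^{-\alpha},
\end{equation*}
uniformly in $x$. Since $\nu$ has finite total mass, integrating in $x$ gives the claimed estimate.

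There is no substantial obstacle here: the result is essentially an uncertainty principle on rectangles, and the only point that requires a bit of care is replacing $\chi_R$ by a Schwartz majorant $\psi$ so that $\widehat{\psi}$ has enough decay to make the dyadic sum over annuli converge. Any loss in constants from this replacement is absorbed into the rectangle-dependent constant $C_R$.
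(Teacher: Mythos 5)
Your proof is correct and follows essentially the same route as the paper: majorize the indicator of $R$ by a smooth function, expand ${|\widehat{\nu}(tu)|}^2$ as a double integral, apply Fubini, and invoke the hypothesis $\nu(B(x,r)) \leq Cr^{\alpha}$. The only difference is that the paper takes the majorant to be $\widehat{\psi}$ with $\psi$ smooth and \emph{compactly supported}, so that after Fubini the kernel $\psi(t(x-y))$ is supported in $\{|x-y| \lesssim 1/t\}$ and a single application of the ball condition finishes the estimate, whereas your direct Schwartz majorant of $\chi_R$ yields a rapidly decaying but non-compactly-supported kernel and hence requires the (harmless) extra dyadic summation over annuli.
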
 

To prove the lemma observe that the left hand side is 
$$ \leq \int {|\widehat{\nu}(tu)|}^2 \widehat{\psi}(u) du, $$ where $\psi$ is a suitably chosen smooth compactly supported function. This expression equals 
$$ \int \int \int e^{2 \pi i(x-y) \cdot tu} \widehat{\psi}(u) du d\nu(x) d\nu(y)=\int \int \psi(t(x-y)) d\nu(x) d\nu(y) \leq C_R t^{-\alpha}$$ by assumption. This completes the proof of the lemma. 

\vskip.125in 

We now parameterize the upper semi-circle $S_1^+$ in the form 
$$\left\{(u, \sqrt{1-u^2}): -1 \leq u \leq 1 \right\}.$$ The argument shall be carried out for this parameterization as the proof for the lower semi-circle is identical. 

Let $d\mu(x)=d\nu_A(x_1) d\nu_B(x_2)$, where $\nu_A, \nu_B$ are Frostman probability measures on $A$ and $B$, respectively such that 
$$\nu_A(B(x,r)) \leq C r^{s_A}, \ \ \nu_B(B(x,r)) \leq C r^{s_B}.$$
Assume without loss of generality that $s_A \ge s_B$. Also assume $E$ is not a point mass, which implies that either
$$\exists \,a\in\R, \ \mu(\{(x_1,x_2):x_1>a\}),\mu(\{(x_1,x_2):x_1<a\})>0,$$
or
$$\exists \,b\in \R,\ \mu(\{(x_1,x_2):x_2>b\}),\mu(\{(x_1,x_2):x_2<b\})>0.$$ 

Without loss of generality, we may assume $\mu(\{(x_1,x_2):x_2>b\}),\mu(\{(x_1,x_2):x_2<b\})>0$ for some $b\in\R$. It follows that
\begin{equation} \label{horsep} \iint \frac{|x_2-y_2|}{|x-y|} d\mu(x) d\mu(y) >0. \end{equation} 
Let $\omega=(\cos(\theta), \sin(\theta))$. Consider the modified Mattila Integral 
\begin{equation} \label{matmod} \int {\left( \int_{S^1} {|\widehat{\mu}(t\omega)|}^2 |\sin(\theta)| d\omega \right)}^2 t dt. \end{equation}

\begin{lemma}\label{modMat}
Suppose \eqref{horsep} holds. Then the finiteness of the integral \eqref{matmod} implies that the Lebesgue measure of the distance set is positive.
\end{lemma}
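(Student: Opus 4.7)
The idea is to replace the usual distance measure by one weighted so as to produce the $|\sin\theta|$ factor in the Mattila integral automatically. Set
\[
\tilde\rho(F) \;=\; \iint \mathbf{1}_F\bigl(|x-y|\bigr)\,\frac{|x_2-y_2|}{|x-y|}\,d\mu(x)\,d\mu(y),
\]
a finite nonnegative Borel measure on $[0,\infty)$ supported on $\Delta(E)$. Writing $x-y=r\omega$ with $\omega=(\cos\theta,\sin\theta)$, the weight becomes $|\sin\theta|$, and by \eqref{horsep} the total mass $\tilde\rho([0,\infty))$ is strictly positive. If we can show that $\tilde\rho$ is absolutely continuous with an $L^2(dr)$ density, the set on which that density is positive has positive Lebesgue measure and is contained in $\Delta(E)$, and the lemma follows.

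To estimate $\|\tilde\rho\|_{L^2(dr)}^2$ we go to the Fourier side. Formally (justified by mollification of $\mu$),
\[
\tilde\rho(r) \;=\; r\int_{S^1}(\mu*\tilde\mu)(r\omega)\,|\sin\theta|\,d\omega
\;=\; r\int_{\mathbb R^2}|\hat\mu(\xi)|^2\,\widehat{|\sin\theta|\,d\sigma_1}(-r\xi)\,d\xi,
\]
where $\sigma_1$ is arclength on $S^1$. Stationary-phase analysis at the two critical points $\omega=\pm y/|y|$ of the phase $\omega\cdot y$ on $S^1$ yields
\[
\widehat{|\sin\theta|\,d\sigma_1}(y) \;=\; c\,|\sin\theta_y|\,|y|^{-1/2}\bigl(e^{2\pi i|y|-i\pi/4}+e^{-2\pi i|y|+i\pi/4}\bigr) + O\bigl(|y|^{-3/2}\bigr),
\]
the amplitude factor $|\sin\theta_y|$ arising because that is the value of $|\sin\theta|$ at the two critical points. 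Inserting the leading term into the formula for $\tilde\rho$, switching to polar coordinates $\xi=t\eta$ with $\eta=(\cos\theta,\sin\theta)$, and applying the one-dimensional Plancherel theorem in $r$ produce
\[
\int r^{-1}\tilde\rho(r)^2\,dr \;\lesssim\; \int_1^\infty t\left(\int_{S^1}|\hat\mu(t\omega)|^2\,|\sin\theta|\,d\omega\right)^{\!2} dt \;+\; \text{(lower order)},
\]
whose main term is a constant multiple of \eqref{matmod}. Since $\tilde\rho$ is supported on a bounded interval (and the contribution from $r$ near $0$ is handled via the Frostman decay of $\mu$, while the small $t$ part is harmless since $\hat\mu$ is bounded there), this gives the required $L^2(dr)$ bound on $\tilde\rho$.

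The one genuinely delicate step is the stationary-phase asymptotic above, because $|\sin\theta|$ is only Lipschitz at $\theta=0,\pi$. I would handle this by splitting $|\sin\theta|$ into a smooth part cut off away from the corners, to which the standard stationary-phase expansion applies, plus a Lipschitz remainder localized near the corners; near the corners the weight $|\sin\theta|$ itself is small in precisely the range where the critical-point amplitude $|\sin\theta_y|$ is small (namely for $y$ nearly horizontal), so a direct integration-by-parts estimate on the remainder produces a uniform error of the desired size. The rest --- the $O(|y|^{-3/2})$ correction, the mollification argument for $\mu*\tilde\mu$, and the small-$r$ bookkeeping --- is routine.
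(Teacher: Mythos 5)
Your proposal is correct and follows essentially the same route as the paper: both replace the distance measure by the one weighted by $\frac{|x_2-y_2|}{|x-y|}$, observe that the stationary-phase amplitude of the weighted circular integral at the critical points $\omega=\pm(x-y)/|x-y|$ is exactly $|\sin\theta_{x-y}|$ (so it matches the weight), and conclude via Plancherel that the $L^2$ norm of the weighted distance measure is controlled by the modified Mattila integral \eqref{matmod}. The paper packages this as a direct modification of Wolff's derivation, introducing $d\nu(t)=e^{i\pi/4}t^{-1/2}d\nu_0(t)+e^{-i\pi/4}|t|^{-1/2}d\nu_0(-t)$ and disposing of the $O((t|x-y|)^{-3/2})$ error by assuming $s\le\frac{3}{2}$ so that it is dominated by $I_{s-\epsilon}(\mu)<\infty$; these are presentational differences only.
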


\begin{proof}[proof]
To prove this lemma, one simply replaces the distance measure in the derivation of the Mattila Integral in \cite{W99}, given by the relation 
$$ \int f(t) d\nu^*_0(t)=\int \int f(|x-y|) d\mu(x) d\mu(y)$$ by a slightly modified distance measure given by 
$$ \int f(t) d\nu_0(t)=\int \int f(|x-y|)  \frac{|x_2-y_2|}{|x-y|} d\mu(x) d\mu(y).$$ 
\vskip.125in 
As in \cite{W99}, define
$$d\nu(t)=e^{i\frac{\pi}{4}} t^{-\frac{1}{2}} d\nu_0(t)+e^{-i\frac{\pi}{4}} |t|^{-\frac{1}{2}} d\nu_0(-t)$$
and it follows that
\begin{equation}\label{hatnu}
\widehat{\nu}(t)=\iint |x-y|^{-\frac{1}{2}}\cos(2\pi (t|x-y|-\frac{1}{8})) \frac{|x_2-y_2|}{|x-y|} d\mu(x) d\mu(y).
\end{equation}

On the other hand,

\begin{equation}\label{hatmu1}
\int |\widehat{\mu}(t\omega)|^2|\sin\theta|\,d\theta=\iint\left(\int e^{2\pi i (x-y)\cdot(t\omega)}|\sin\theta|\,d\theta\right)d\mu(x)\,d\mu(y). 
\end{equation}

Let $\theta_{x-y}$ be the angle between the vector $x-y$ and the $x$-axis. Then $|\sin\theta_{x-y}|=\frac{|x_2-y_2|}{|x-y|}$.  We may assume $s$, the Hausdorff dimension of $E$, is not greater than $\frac{3}{2}$. By stationary phase(see, e.g. \cite{W04} for details),

\begin{equation}\label{hatmu2}
\begin{aligned}
 \int e^{2\pi i (x-y)\cdot(t\omega)}|\sin\theta|\,d\theta= &2 (|t||x-y|)^{-\frac{1}{2}} \cos(2\pi (t|x-y|-\frac{1}{8}))|\sin \theta_{x-y}|+ O((t|x-y|)^{-\frac{3}{2}})\\=&2 (|t||x-y|)^{-\frac{1}{2}} \cos(2\pi (t|x-y|-\frac{1}{8}))\frac{|x_2-y_2|}{|x-y|}+O((t|x-y|)^{-s+\epsilon}).
\end{aligned}
\end{equation}

\vskip.125in
Putting \eqref{hatnu}, \eqref{hatmu1}, \eqref{hatmu2} together, one can see

\begin{equation*}
\begin{aligned}
||\widehat{\nu}||_2^2= & \int_{|t|\leq 1} |\widehat{\nu}(t)|^2\,dt+\int_{|t|\geq 1} |\widehat{\nu}(t)|^2\,dt\\\leq & 1+\int_1^\infty \left(\int |\widehat{\mu}(t\omega)|^2|\sin\theta|\,d\theta\right)^2 t\,dt+C I_{s-\epsilon}(\mu),\end{aligned}
\end{equation*}

which proves the lemma since $I_{s-\epsilon}(\mu)<\infty$.

\end{proof}

We now proceed with the estimation of (\ref{matmod}). It follows that 
$$ \int_{S^1} {|\widehat{\mu}(t\omega)|}^2 |\sin(\theta)| d\omega$$ is bounded by the sum of two terms of the form 
\begin{equation} \label{avAB} 
\begin{aligned}
& \int_{-1}^1 {|\widehat{\nu}_A(tu)|}^2 {\left|\widehat{\nu}_B\left(\pm t \sqrt{1-u^2}\right)\right|}^2 du
\\\leq &\int_{-1}^1 {|\widehat{\nu}_A(tu)|}^2 du \leq C t^{-s_A} \end{aligned}\end{equation} by Lemma \ref{basic}. 

Plugging (\ref{avAB}) into the modified Mattila integral (\ref{matmod}) we see that 
\begin{equation*}\begin{aligned}{\mathcal M}(\mu) \leq &C \int \int_{S^1} {|\widehat{\mu}(t\omega)|}^2 t \cdot t^{-s_A} d\omega dt\\
=&C \int {|\widehat{\mu}(\xi)|}^2 {|\xi|}^{-s_A} d\xi\\
=& C'\int \int {|x-y|}^{-2+s_A} d\mu(x) d\mu(y)\end{aligned}\end{equation*} and this energy integral (see e.g. \cite{W04} or \cite{M95}) is finite if 
$$ s_A+s_B>2-s_A,$$
 as desired. 

\vskip.25in 

\section{Proof of Theorem \ref{main2}} 

We improve the upper bound of
\eqref{erdest2} to prove the theorem. More precisely, under the
assumptions of Theorem \ref{main2}, there exists $\delta=\delta(C_{\nu_A})>0$
such that

$$\int_{S^1}|\hat{\mu}(t\omega)|^2\,d\omega\lesssim t^{-\alpha-\delta},$$
where $\mu=\nu_A\times\nu_B$.
\vskip.125in
First we deal with the case when $\theta$ is close to $0$. We have 
\begin{equation} \label{smallangle}\begin{aligned}& \int_0^{\delta} {|\widehat{\nu}_A(t\cos(\theta))|}^2 {|\widehat{\nu}_B(t \sin(\theta))|}^2 d\theta 
\\ \leq& \int {|\widehat{\nu}_B(tu)|}^2 \widehat{\psi}(u/\delta) du\end{aligned}\end{equation} with an appropriately chosen cut-off function $\psi$. This expression equals 
$$ \delta \int \int \psi(\delta t(u-v)) d\nu_B(u) d\nu_B(v) \leq C  \delta^{1-\alpha} \cdot t^{-\alpha}.$$ 

Choosing $\delta=t^{-\gamma_0}$, where $\gamma_0$ is a small positive number to be determined later, we see that the expression in (\ref{smallangle}) is 
\begin{equation}
  \label{near0}
 \leq C t^{-\gamma_0(1-\alpha)} \cdot t^{-\alpha}. 
\end{equation}
\vskip.125in
We can deal with the neighborhood near $\frac{\pi}{2}$ in the same way, so we omit this part of the calculation. 
\vskip.125in
Now consider 
\begin{equation} \label{start} \int_I {|\widehat{\nu}_A(t\cos(\theta))|}^2 {|\widehat{\nu}_B(t\sin(\theta))|}^2 d\theta, \end{equation} where 
$I$ is an interval that excludes both $(0, t^{-\gamma_0})$ and a fixed neighborhood of $\frac{\pi}{2}$. 

By Cauchy-Schwartz, this expression (\ref{start}) is bounded by 
$$ C {\left( \int_I {|\widehat{\nu}_A(t\cos(\theta))|}^4 d\theta \right)}^{\frac{1}{2}} \cdot  {\left( \int_I {|\widehat{\nu}_B(t\sin(\theta))|}^4 d\theta \right)}^{\frac{1}{2}}.$$ 

Making the change of variables $u=\cos(\theta)$ and $u=\sin(\theta)$, respectively, we see that this expression is 
$$ \leq C t^{\gamma_0} {\left( \int {|\widehat{\nu}_A(tu)|}^4 \widehat{\psi}(u) du \right)}^{\frac{1}{2}} \cdot  
{\left( \int  {|\widehat{\nu}_B(tu)|}^4 \widehat{\psi}(u) du
  \right)}^{\frac{1}{2}}=C t^{\gamma_0} \sqrt{I} \cdot
\sqrt{II},$$ where $\psi$ is a smooth positive function whose Fourier transform has compact support. 

Expanding each expression and changing the order of integration, we obtain 
\begin{equation}
\begin{aligned}
I&=\int \int \int \int \psi(t(u_1-u_2+u_3-u_4))
d\nu_A(u_1)d\nu_A(u_2)d\nu_A(u_3)d\nu_A(u_4)
\\&\lesssim
\nu_A\times\nu_A\times\nu_A\times\nu_A\{(u_1,u_2,u_3,u_4)\in A^4: |(u_1+u_2)-(u_3+u_4)|<t^{-1}\}
\end{aligned}
\end{equation}
 and 
\begin{equation}
\begin{aligned}
 II&=\int \int \int \int \psi(t(u_1-u_2+u_3-u_4))
 d\nu_B(u_1)d\nu_B(u_2)d\nu_B(u_3)d\nu_B(u_4)
\\&\lesssim
\nu_B\times\nu_B\times\nu_B\times\nu_B\{(u_1,u_2,u_3,u_4)\in B^4: |(u_1+u_2)-(u_3+u_4)|<t^{-1}\}.
\end{aligned}
\end{equation}

Observe that we trivially have 

\begin{equation}\label{triE}
 I \lesssim t^{-\alpha}; \ II \lesssim t^{-\alpha}.
 \end{equation}

It follows that 
$$ Ct^{\frac{\gamma_0}{2}} \sqrt{I} \cdot \sqrt{II} \leq
Ct^{-\alpha}\leq Ct^{\frac{\gamma_0}{2}} \cdot t^{-\frac{dim_{{\mathcal H}}(A \times B)}{2}},$$ which recovers Wolff's $\frac{4}{3}$ exponent as $\gamma_0 \to 0$. Moreover, the only way this estimate does not beat 
$\frac{4}{3}$ is if 
\begin{equation} \label{lenergy} I,II \ge
  Ct^{-\alpha+\frac{\gamma_0}{2}} \end{equation} for a sequence of
$t$'s going to infinity. The following theorem due to Dyatlov and Zahl
(\cite{Z15}) shows that this cannot happen for Ahfors-David regular sets. 

\begin{definition}[Dyatlov and Zahl, \cite{Z15}]
  Let $X\subset [0,1]^d$ and $\nu$ be an outer measure
  on $X$ with $0<\nu(X)<\infty$. For $r>0$, define the (scale $r$)
  additive energy by
$$\mathcal{E}(X,\nu,r)=\nu\times \nu\times \nu\times \nu\{(u_1,u_2,u_3,u_4)\in X^4:
|(u_1+u_2)-(u_3+u_4)|<r\}.$$
\end{definition}
\begin{theorem}[Dyatlov and Zahl, \cite{Z15}]\label{Z15}
  Let $X\subset [0,1]$ be an Ahlfors-David regular set of Hausdorff
  dimension $\alpha$ and $\nu$ be a measure
  on $X$ such that for some constant $0<C_\nu<\infty$,
$$C_\nu^{-1}r^\alpha<\nu(B(x,r))<C_\nu r^\alpha,\ \forall x\in X,\,0<r<1.$$
Then
$$\mathcal{E}(X,\nu,r)\leq \widetilde{C} \,r^{\alpha+\beta_\nu}$$
for some $\beta_\nu>0$ and some $\widetilde{C}>0$. In particular, we can
choose
$$\beta_\nu=\alpha\,e^{-exp[K(1+\log C_\nu)^{1/2}(1-\alpha)^{-1/2}]}$$
where $K$ is an absolute constant;  $\widetilde{C}$ depends only on
$\alpha$ and $C_\nu$.
\end{theorem}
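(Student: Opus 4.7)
My plan is to prove Theorem \ref{Z15} through a multi-scale structural dichotomy: the saturation of $\mathcal{E}(X,\nu,r)$ at the trivial rate $r^{\alpha}$ would force arithmetic structure on $X$ at scale $r$ that is incompatible with Ahlfors--David regularity at finer scales. The trivial bound $\mathcal{E}(X,\nu,r) \lesssim r^{\alpha}$ follows by fixing $u_2,u_3,u_4$, using the upper bound in \eqref{AD} to control the $u_1$-integral over an interval of length $2r$, and then integrating in the remaining variables; the task is to beat this by a multiplicative factor $r^{\beta_\nu}$.

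First I would discretize. Cover $X$ by disjoint intervals $\{I_j\}_{j=1}^N$ of length $r$, where $N \sim r^{-\alpha}$ and $\nu(I_j) \sim r^{\alpha}$ by \eqref{AD}. Then $\mathcal{E}(X,\nu,r)$ is comparable to $r^{4\alpha}\,E(X_r)$, where $X_r$ is the set of interval centers and $E$ is the combinatorial additive energy; the trivial bound corresponds exactly to $E(X_r) \lesssim N^3$. Next I would apply a Balog--Szemer\'edi--Gowers-type step: if $E(X_r) \gtrsim N^3/K$, there is a subset $X' \subset X_r$ of size $\gtrsim N/K^{O(1)}$ with small doubling $|X'+X'| \lesssim K^{O(1)}N$. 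A Freiman/Pl\"unnecke-Ruzsa step then confines $X'$ inside a generalized arithmetic progression of bounded rank and size comparable to $N$.

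Third, I would invoke AD-regularity at an intermediate scale $r \ll r' \ll 1$: it forces the points of $X_r$ to distribute with density $\sim (r'/r)^{\alpha}$ across every subinterval of length $r'$, whereas a bounded-rank GAP containing $X'$ clusters its elements along an arithmetic skeleton and necessarily leaves many $r'$-intervals underrepresented. Making this tension quantitative yields a first improvement $\beta_0 > 0$ over the trivial bound at a single scale. Iterating the dichotomy across a carefully chosen sequence of dyadic scales, and combining the per-scale gains, amplifies $\beta_0$ to the stated $\beta_\nu$.

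The main obstacle is the quantitative bookkeeping in this iteration. Each application of Balog--Szemer\'edi--Gowers and Freiman loses polynomial factors in the doubling parameter $K$, and these losses must be propagated across scales without swamping the multiplicative gain supplied by AD-regularity. The number of iterations one can sustain is the delicate point: it compounds the constants from one level to the next, which is precisely what produces the doubly-exponential dependence of $\beta_\nu$ on $C_\nu$ in the theorem. A secondary difficulty is that as $\alpha \uparrow 1$, the density of $X_r$ inside each $r'$-interval approaches uniformity, so the rigidity extracted from \eqref{AD} weakens, accounting for the $(1-\alpha)^{-1/2}$ loss in the exponent.
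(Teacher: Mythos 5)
First, a point of comparison: this paper does not prove Theorem \ref{Z15} at all. It is imported verbatim from Dyatlov and Zahl \cite{Z15} and used as a black box in the proof of Theorem \ref{main2}, so your sketch is really competing with the argument in \cite{Z15}. Your setup is fine: the trivial bound $\mathcal{E}(X,\nu,r)\lesssim r^{\alpha}$ is derived correctly, and the discretization to $N\sim r^{-\alpha}$ intervals with the reformulation $E(X_r)\lesssim N^{3}$ is the right normalization. The difficulty is entirely in the mechanism you propose for beating it.

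The Balog--Szemer\'edi--Gowers plus Freiman pipeline cannot deliver the stated conclusion, because the regime you must address is $E(X_r)\gtrsim N^{3}/K$ with $K=N^{\beta/\alpha}$ a \emph{power} of $N$. BSG then yields a subset of density $K^{-O(1)}=N^{-O(\beta)}$ with doubling $K^{O(1)}=N^{O(\beta)}$, and every available form of Freiman's theorem applied with a doubling constant that is a power of $N$ produces a generalized arithmetic progression whose rank is unbounded and whose size bound is super-polynomial in $N$ --- not ``bounded rank and size comparable to $N$'' as your sketch asserts. The containment is therefore vacuous exactly where you need it. Your third step is also the entire content of the theorem and is left unquantified; worse, the geometric picture is backwards in the model case: a rank-one progression with spacing $\sim r^{\alpha}$ is perfectly \emph{equidistributed} at scales above its spacing and does not ``leave $r'$-intervals underrepresented'' --- what it violates is the \emph{lower} bound in \eqref{AD}, since an $\alpha$-regular set with $\alpha<1$ must concentrate mass $\sim(r')^{\alpha}\gg r'$ near each of its points rather than spread it uniformly. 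The proof in \cite{Z15} avoids inverse theorems altogether: it is an elementary induction on scales in which the single-scale input is a combinatorial improvement on the additive energy of the pattern of children intervals, regularity with $\alpha<1$ guarantees (after a number of scale steps controlled by $C_\nu$) that this pattern is rich enough to produce a definite multiplicative gain, and compounding the gains over $\log(1/r)$ scales gives the power $r^{\beta_\nu}$; the count of affordable steps is what produces the doubly exponential dependence of $\beta_\nu$ on $C_\nu$ and $(1-\alpha)^{-1}$. Your closing paragraph correctly anticipates where the constants come from, but the tools you chose cannot carry the quantitative load.
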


From Theorem \ref{Z15}, Definition \ref{regC} and the trivial estimate
\eqref{triE} of $II$, it follows that

$$I\lesssim t^{-\alpha-\beta_{\nu_A}}, II\lesssim t^{-\alpha},$$
where $\beta_{\nu_A}$ is defined in Theorem \ref{Z15}. All implicit
constants are finite, independent on $t$.
\vskip.125in
Together with the estimate near $\theta=0$ \eqref{near0}, we can bound 
\eqref{erdest2} by
$$Ct^{-\gamma_0(1-\alpha)-\alpha}+Ct^{\frac{\gamma_0}{2}}t^{-\alpha-\gamma},$$
where $\gamma=\gamma(C_{\nu_A})>0$. Let $\gamma_0>0$ be a small enough, we get
$$\int_{S^1}|\hat{\mu}(t\xi)|^2\,d\xi\lesssim t^{-\alpha-\delta}$$
for some $\delta=\delta(C_{\nu_A})>0$.

\section{Proof of Theorem \ref{mind}} 

\vskip.125in 

Let $\nu_j$ denote the restriction of the $s_j$-dimensional Hausdorff measure to $A_j$ and assume without loss of generality that $s_1 \ge s_2 \ge \dots \ge s_d$. Parameterize the upper half-sphere in the form 
$$ \left\{(u_1, u_2, \dots, u_{d-1}, \sqrt{1-u_1^2-\dots-u_{d-1}^2}): -1 \leq u_j \leq 1 \right\}.$$ 

Let $\mu$ denote the product measure on $E$, $\theta_\omega$ be the angle between the vector $\omega\in S^{d-1}$ and the hyperplane $\{x_{d}=0\}$. Without loss of generality, we may assume 
$$ \mu(\{(x_1,\dots, x_d):x_d>a\}),\mu(\{(x_1,\dots,x_d):x_d<a\})>0$$ for some $a\in\R$.

An argument identical to the one in the proof of Lemma \ref{modMat} shows the finiteness of 
\begin{equation}\label{modMatRd}
 \int_1^\infty\left(\int_{S^{d-1}} {|\widehat{\mu}(t \omega)|}^2 |\sin\theta_\omega|\,d\omega\right)^2 t^{d-1}\,dt 
 \end{equation}
implies that the distance set has positive Lebesgue measure.
\vskip.125in
It follows that $$ \int_{S^{d-1}} {|\widehat{\mu}(t \omega)|}^2 |\sin\theta_\omega|\,d\omega$$ is bounded by two terms of the form
\begin{equation*}
\begin{aligned}
&\int_{-1}^1 \dots \int_{-1}^1 {|\widehat{\nu}_1(tu_1)|}^2 \dots 
{|\widehat{\nu}_{d-1}(tu_{d-1})|}^2 \cdot {\left|\widehat{\nu}_d\left(\pm t\sqrt{1-|u|^2}\right)\right|}^2 du_1 \dots du_{d-1}\\
 \leq& \int_{-1}^1 \dots \int_{-1}^1 {|\widehat{\nu}_1(tu_1)|}^2 \dots {|\widehat{\nu}_{d-1}(tu_{d-1})|}^2 \,du_1 \dots du_{d-1}.
 \end{aligned}
 \end{equation*}

By Lemma \ref{basic}, this quantity is 
$$ \leq Ct^{-s_1-\dots-s_{d-1}} \leq Ct^{-s \frac{d-1}{d}},$$
where $s=\sum_{j=1}^d s_j$.

Plugging this estimate into the Mattila Integral (\ref{modMatRd}) we obtain 
\begin{equation*}\begin{aligned}& C \int \int {|\widehat{\mu}(t \omega)|}^2 t^{d-1} \cdot t^{-s \frac{d-1}{d}} d\omega dt\\
=&C \int {|\widehat{\mu}(\xi)|}^2 {|\xi|}^{-s \frac{d-1}{d}} d\xi\end{aligned}\end{equation*} and this integral is finite if 
$$ d-s<s \frac{d-1}{d},$$ which is the case if 
$$ s>\frac{d^2}{2d-1}, \ \text{as desired}.$$

\newpage

\vskip.125in 

\end{document}